\newtheorem{theorem}{Theorem}[section]
\newtheorem{definition}[theorem]{Definition}
\newtheorem{proposition}[theorem]{Proposition}
\begin{document}

\title[Noncommutative spheres and projective spaces]{Uniqueness results for noncommutative spheres and projective spaces}

\author{Teodor Banica}
\address{T.B.: Department of Mathematics, Cergy-Pontoise University, 95000 Cergy-Pontoise, France. {\tt teodor.banica@u-cergy.fr}}

\author{Szabolcs M\'esz\'aros}

\address{S.M.: Department of Mathematics, Central European University, 1051 Budapest, Hungary. {\tt szabolcs.thai@gmail.com}}

\subjclass[2000]{46L65 (46L54)}
\keywords{Half-liberation, Noncommutative sphere}

\begin{abstract}
It is known that, under strong combinatorial axioms, $O_N\subset O_N^*\subset O_N^+$ are the only orthogonal quantum groups. We prove here similar results for the noncommutative spheres $S^{N-1}_\mathbb R\subset S^{N-1}_{\mathbb R,*}\subset S^{N-1}_{\mathbb R,+}$, the noncommutative projective spaces $P^{N-1}_\mathbb R\subset P^{N-1}_\mathbb C\subset P^{N-1}_+$, and the projective orthogonal quantum groups $PO_N\subset PO_N^*\subset PO_N^+$.
\end{abstract}

\maketitle

\section*{Introduction}

The concept of half-liberation goes back to \cite{bsp}, \cite{bve}. According to an old result of Brauer \cite{bra}, for the orthogonal group $O_N$, with fundamental representation $u$, we have:
$$Hom(u^{\otimes k},u^{\otimes l})=span\left(T_\pi\Big|\pi\in P_2(k,l)\right)$$

Here $P_2(k,l)$ is the set of pairings between an upper row of $k$ points, and a lower row of $l$ points, and the action of pairings on the tensors over $\mathbb C^N$ is as follows, with the Kronecker symbols $\delta_\pi\in\{0,1\}$ being 1 when all the strings of $\pi$ join pairs of equal indices:
$$T_\pi(e_{i_1}\otimes\ldots\otimes e_{i_k})=\sum_{j_1\ldots j_l}\delta_\pi(^{i_1\ldots i_k}_{j_1\ldots j_l})e_{j_1}\otimes\ldots\otimes e_{j_l}$$

A similar result holds for $O_N^+$. This quantum group, introduced by Wang in \cite{wan}, and satisfying the axioms of Woronowicz in \cite{wo1}, \cite{wo2}, is given by: 
$$C(O_N^+)=C^*\left((u_{ij})_{i,j=1,\ldots,N}\Big|u=\bar{u},u^t=u^{-1}\right)$$

In other words, the passage $O_N\to O_N^+$ is obtained by assuming that the standard coordinates $u_{ij}$ no longer satisfy the commutation relations $ab=ba$. Now since these commutation relations read $T_{\slash\hskip-1.5mm\backslash}\in End(u^{\otimes 2})$, removing them amounts in ``removing the crossings'' from the corresponding set of pairings. We are therefore led to an analogue of the Brauer formula, with $P_2$ being replaced by the set of noncrossing pairings $NC_2$.

This phenomenon, reminding the liberation philosophy in free probability theory \cite{bpa}, \cite{nsp}, \cite{spe}, \cite{vdn}, was investigated in \cite{bsp}, \cite{bve}, one of the findings there being that an intermediate object $O_N^*$ can be inserted, according to the following scheme:
$$\xymatrix@R=7mm@C=10mm{
O_N^+\ar@{.}[r]&\emptyset\ar@{.}[r]&\emptyset\ar@{.}[r]&NC_2\ar[d]\\
O_N^*\ar@{.}[r]\ar[u]&abc=cba\ar@{.}[r]&\slash\hskip-2.0mm\backslash\hskip-1.67mm|\ar@{.}[r]&P_2^*\ar[d]\\
O_N\ar@{.}[r]\ar[u]&ab=ba\ar@{.}[r]&\slash\hskip-2.0mm\backslash\ar@{.}[r]&P_2
}$$

To be more precise, $O_N^*\subset O_N^+$ appears by assuming that the standard coordinates $u_{ij}$ satisfy the ``half-commutation'' relations $abc=cba$. These relations are equivalent to $T_{\slash\hskip-1.5mm\backslash\hskip-1.16mm|}\in End(u^{\otimes 3})$, and the uniqueness result, proved in \cite{bve}, states that the corresponding category $P_2^*=<\slash\hskip-2.0mm\backslash\hskip-1.67mm|>$ is the unique intermediate one $NC_2\subset P\subset P_2$. See \cite{bsp}, \cite{bve}.

We will formulate and prove here a number of similar results, concerning some related geometric objects. We will first discuss the case of noncommutative spheres, with the statemement that, under strong axioms, the spheres $S^{N-1}_\mathbb R\subset S^{N-1}_{\mathbb R,*}\subset S^{N-1}_{\mathbb R,+}$ constructed in \cite{bgo} are the only ones. Then we will discuss the passage from the affine to the projective setting, with uniqueness results both for the associated noncommutative projective spaces $P^{N-1}_\mathbb R\subset P^{N-1}_\mathbb C\subset P^{N-1}_+$, and for their quantum isometry groups $PO_N\subset PO_N^*\subset PO_N^+$.
 
The paper is organized as follows: in \S 1-2 we discuss the spheres and projective spaces, and in \S 3-4 we discuss the associated quantum isometry groups.

\medskip

\noindent {\bf Acknowledgements.} This work was done during the graduate Summer school ``Topological quantum groups'', Bedlewo 2015, and we would like to express our gratitude to the organizers, Uwe Franz, Adam Skalski and Piotr So\l tan. Also, we would like to thank Malte Gerhold, Jan Liszka-Dalecki, and the referee, for several useful suggestions.

\section{Noncommutative spheres}

According to \cite{bgo}, which was inspired from Wang's paper \cite{wan}, and from \cite{bsp}, the free and half-liberated analogues of the unit sphere $S^{N-1}_\mathbb R\subset\mathbb R^N$ are constructed as follows:

\begin{definition}
Associated to any $N\in\mathbb N$ is the following universal $C^*$-algebra:
$$C(S^{N-1}_{\mathbb R,+})=C^*\left(x_1,\ldots,x_N\Big|x_i=x_i^*,\sum_ix_i^2=1\right)$$
The quotient of this algebra by the relations $x_ix_jx_k=x_kx_jx_i$ is denoted $C(S^{N-1}_{\mathbb R,*})$.
\end{definition}

Observe that the above two algebras are indeed well-defined, because the quadratic relations $\sum_ix_i^2=1$ show that we have $||x_i||\leq1$, for any $C^*$-norm. Thus the biggest $C^*$-norm is bounded, and the enveloping $C^*$-algebras are well-defined.

We use the convention that the category of ``noncommutative compact spaces'' is the category of unital $C^*$-algebras, with the arrows reversed. Given such a space $X=Spec(A)$, its classical version $X_{class}$, which is a usual compact space, is the Gelfand spectrum $X_{class}=Spec(A/I)$, where $I\subset A$ is the commutator ideal. We have then:

\begin{proposition}
We have inclusions of noncommutative compact spaces 
$$S^{N-1}_\mathbb R\subset S^{N-1}_{\mathbb R,*}\subset S^{N-1}_{\mathbb R,+}$$ 
and $S^{N-1}_\mathbb R$ is the classical version of both spaces on the right.
\end{proposition}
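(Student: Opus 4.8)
The plan is to establish the two inclusions by exhibiting the corresponding surjections of $C^*$-algebras, and then to identify both abelianizations with $C(S^{N-1}_\mathbb R)$ via Gelfand duality.

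First I would set up the inclusions. By the stated convention, an inclusion of noncommutative compact spaces $X\subset Y$ is by definition a surjection $C(Y)\to C(X)$. The inclusion $S^{N-1}_{\mathbb R,*}\subset S^{N-1}_{\mathbb R,+}$ is then immediate, being precisely the defining quotient map that produces $C(S^{N-1}_{\mathbb R,*})$ from $C(S^{N-1}_{\mathbb R,+})$ in the definition above. For $S^{N-1}_\mathbb R\subset S^{N-1}_{\mathbb R,*}$, I would observe that the commutation relations $x_ix_j=x_jx_i$ trivially imply the half-commutation relations $x_ix_jx_k=x_kx_jx_i$; hence the universal commutative $C^*$-algebra on generators $x_i$ subject to $x_i=x_i^*$ and $\sum_ix_i^2=1$ is a quotient of $C(S^{N-1}_{\mathbb R,*})$, which is the required surjection.

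Next I would compute the classical versions. Again by the stated convention, the classical version of each space is the spectrum of its abelianization, i.e.\ of the quotient by the commutator ideal. For $C(S^{N-1}_{\mathbb R,+})$ this abelianization is the universal commutative unital $C^*$-algebra generated by self-adjoint elements $x_1,\ldots,x_N$ with $\sum_ix_i^2=1$. By Gelfand duality it equals $C(Y)$, where $Y$ is its character space; a character $\chi$ is determined by the scalars $\lambda_i=\chi(x_i)$, which are real since each $x_i$ is self-adjoint, and which satisfy $\sum_i\lambda_i^2=1$. Thus $Y$ is exactly the unit sphere in $\mathbb R^N$, and the classical version of $S^{N-1}_{\mathbb R,+}$ is $S^{N-1}_\mathbb R$. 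For $C(S^{N-1}_{\mathbb R,*})$ the same computation applies verbatim, since once the generators commute the half-commutation relations become vacuous, so its abelianization carries the identical universal property and is again $C(S^{N-1}_\mathbb R)$.

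This proposition is essentially formal, and I do not anticipate a genuine obstacle. The one point requiring care is the verification that the universal commutative $C^*$-algebra with the given relations is precisely $C(S^{N-1}_\mathbb R)$, rather than a larger or a degenerate object. Nondegeneracy follows from the boundedness remark after the definition (the relation $\sum_ix_i^2=1$ forces $\|x_i\|\leq1$, so the enveloping $C^*$-algebra exists and is nonzero), while the exact identification of the spectrum with the real sphere, rather than some complex variety, is precisely what the self-adjointness of the generators guarantees.
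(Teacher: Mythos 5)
Your argument is correct and follows essentially the same route as the paper: both identify $C(S^{N-1}_\mathbb R)$ as the universal commutative $C^*$-algebra on self-adjoint generators with $\sum_ix_i^2=1$ (the paper citing Gelfand and Stone--Weierstrass, you computing the character space directly) and then read off the inclusions and classical versions from the resulting chain of quotient maps. No issues.
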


\begin{proof}
According to the Gelfand and the Stone-Weierstrass theorems, the algebra of continuous functions on the real sphere has the following description:
$$C(S^{N-1}_\mathbb R)=C^*_{comm}\left(x_1,\ldots,x_N\Big|x_i=x_i^*,\sum_ix_i^2=1\right)$$

Thus we have quotient maps $C(S^{N-1}_{\mathbb R,+})\to C(S^{N-1}_{\mathbb R,*})\to C(S^{N-1}_\mathbb R)$, with the second map being obtained by dividing by the commutator ideal, and this gives the result.
\end{proof}

We can axiomatize our spheres, by using the following notion, from \cite{ba1}:

\begin{definition}
A monomial sphere is a subset $S\subset S^{N-1}_{\mathbb R,+}$ obtained via relations of type
$$x_{i_1}\ldots x_{i_k}=x_{i_{\sigma(1)}}\ldots x_{i_{\sigma(k)}},\ \forall (i_1,\ldots,i_k)\in\{1,\ldots,N\}^k$$
with $\sigma\in S_k$ being certain permutations, of variable size $k\in\mathbb N$.
\end{definition}

Equivalently, consider the inductive limit $S_\infty=\bigcup_{k\geq0}S_k$, with the inclusions $S_k\subset S_{k+1}$ being given by $\sigma\in S_k\implies\sigma(k+1)=k+1$. To any $\sigma\in S_\infty$ we can then associate the relations $x_{i_1}\ldots x_{i_k}=x_{i_{\sigma(1)}}\ldots x_{i_{\sigma(k)}}$, for any $(i_1,\ldots,i_k)\in\{1,\ldots,N\}^k$, with $k\in\mathbb N$ being such that $\sigma\in S_k$. Observe that these relations are indeed unchanged when replacing $k\to k+1$, because by using $\sum_ix_i^2=1$ we can always ``simplify'' at right:
\begin{eqnarray*}
x_{i_1}\ldots x_{i_k}x_{i_{k+1}}=x_{i_{\sigma(1)}}\ldots x_{i_{\sigma(k)}}x_{i_{k+1}}
&\implies&x_{i_1}\ldots x_{i_k}x_{i_{k+1}}^2=x_{i_{\sigma(1)}}\ldots x_{i_{\sigma(k)}}x_{i_{k+1}}^2\\
&\implies&\sum_{i_{k+1}}x_{i_1}\ldots x_{i_k}x_{i_{k+1}}^2=\sum_{i_{k+1}}x_{i_{\sigma(1)}}\ldots x_{i_{\sigma(k)}}x_{i_{k+1}}^2\\
&\implies&x_{i_1}\ldots x_{i_k}=x_{i_{\sigma(1)}}\ldots x_{i_{\sigma(k)}}
\end{eqnarray*}

With this convention, a monomial sphere is a subset $S\subset S^{N-1}_{\mathbb R,+}$ obtained via relations $x_{i_1}\ldots x_{i_k}=x_{i_{\sigma(1)}}\ldots x_{i_{\sigma(k)}}$ as above, associated to certain elements $\sigma\in S_\infty$.

Observe that the basic 3 spheres are all monomial, with the permutations producing $S^{N-1}_\mathbb R,S^{N-1}_{\mathbb R,*}$ being the standard crossing and the half-liberated crossing:
$$\xymatrix@R=10mm@C=8mm{\circ\ar@{-}[dr]&\circ\ar@{-}[dl]\\\circ&\circ}\qquad\qquad\qquad
\xymatrix@R=10mm@C=5mm{\circ\ar@{-}[drr]&\circ\ar@{-}[d]&\circ\ar@{-}[dll]\\\circ&\circ&\circ}$$ 

Here, and in what follows, we agree to represent the permutations $\sigma\in S_k$ by diagrams between two rows of $k$ points, acting by definition downwards.

With these notions in hand, we can now formulate our main classification result:

\begin{theorem}
The spheres $S^{N-1}_\mathbb R\subset S^{N-1}_{\mathbb R,*}\subset S^{N-1}_{\mathbb R,+}$ are the only monomial ones.
\end{theorem}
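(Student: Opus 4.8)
The plan is to reformulate the problem group-theoretically and then classify the possible symmetry groups. To any monomial sphere $S$, I would associate the set $G_S\subset S_\infty$ of all permutations $\sigma$ for which the relation $x_{i_1}\ldots x_{i_k}=x_{i_{\sigma(1)}}\ldots x_{i_{\sigma(k)}}$ holds on $S$, for every multi-index. Since $S$ is by definition the quotient of $S^{N-1}_{\mathbb R,+}$ by exactly these relations, the assignment $S\mapsto G_S$ is injective and order-reversing, so it suffices to determine which subsets $G_S$ can occur. First I would check that $G_S$ is a subgroup of $S_\infty$: reading a relation backwards gives closure under $\sigma\mapsto\sigma^{-1}$, substituting one relation into another gives closure under composition, and the identity is trivially present.

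Next I would record the extra closure properties that separate $G_S$ from an arbitrary subgroup, all flowing from $\sum_ix_i^2=1$. The simplification computation already displayed in the text shows that $G_S$ is stable under the stabilization $S_k\subset S_{k+1}$ and under erasing a fixed endpoint on the right; left-multiplying and summing gives the same on the left. The essential further move is \emph{capping}: given $\sigma$ and two positions that are adjacent in both the left and the right word, setting their indices equal and summing over the common value turns $x_{i_p}^2$ into $1$ on both sides and yields a valid monomial relation of degree $k-2$. Thus $G_S$ is a filtered subgroup of $S_\infty$ closed under two-sided stabilization, endpoint erasure, and capping.

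The heart of the matter is then the purely combinatorial claim that such a $G$ must be $\{1\}$, the parity group $H_\infty=\mathrm{Sym}(\mathrm{odd})\times\mathrm{Sym}(\mathrm{even})$ of permutations preserving the parity of positions, or all of $S_\infty$. The controlling invariant is this position-parity, which is preserved both by right endpoint erasure and by capping a same-parity pair. If some element of $G$ violates parity, I would strip and cap it down, always retaining a parity-violating permutation, until reaching $S_2$ or $S_3$, where the only options are the adjacent transposition $(12)$ or the cyclic $(123)$: the first is the full commutation relation, and for the second one computes directly, using $\sum_cx_cx_ax_c=x_a$, that $x_ax_b=x_bx_a$. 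Either way $(12)\in G$, so $G=S_\infty$ and $S=S^{N-1}_{\mathbb R}$. If instead $G$ is nontrivial but parity-preserving, the same reduction lands on the half-liberated crossing $(13)$; two-sided stabilization then produces all distance-two transpositions $(a,a+2)$, which generate $H_\infty$, while parity-preservation gives $G\subseteq H_\infty$, whence $G=H_\infty$ and $S=S^{N-1}_{\mathbb R,*}$. The case $G=\{1\}$ is the free sphere, since on $S^{N-1}_{\mathbb R,+}$ a relation $R_\sigma$ valid for all multi-indices forces $\sigma=1$, distinct-index monomials being linearly independent.

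The step I expect to be the main obstacle is the inductive reduction inside this combinatorial claim: one must verify that the strip/cap operations can always be applied so as to strictly decrease the number of points while preserving parity-violation (respectively, nontriviality within $H_\infty$), and in particular that a cap can be chosen whose contraction does not collapse the relation to the identity. This demands, at each step, a careful choice of which adjacent pair to contract, which is precisely the crossing bookkeeping that makes a cap \emph{clean}. It should be paired with the matching verification, via the standard $2\times 2$ matrix model of half-liberation, that $S^{N-1}_{\mathbb R,*}$ admits no parity-mixing relation, so that its group is exactly $H_\infty$ and the three groups above are genuinely distinct.
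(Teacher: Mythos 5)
Your framework coincides with the paper's: the associated filtered group $G\subset S_\infty$, its closure under stabilization, endpoint erasure and capping, the identification of $S_\infty^*$ with the parity\--preserving subgroup, and the endgame in which $(12)$ generates $S_\infty$ while the distance\--two transpositions generate $S_\infty^*$. Your algebraic derivation of $x_ax_b=x_bx_a$ from the $3$-cycle relation, via $\sum_cx_cx_ax_c=x_a$ and $x_ax_b=\sum_c(x_bx_cx_a)x_c=x_bx_a$, is correct and is in fact a slicker substitute for the paper's diagrammatic handling of that case. However, the step you yourself flag as ``the main obstacle'' is a genuine gap, and it cannot be closed by a careful choice of which adjacent pair to contract: for a general element of $G$ there is no legal move at all. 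The permutation $1\mapsto3$, $2\mapsto1$, $3\mapsto4$, $4\mapsto2$ fixes neither endpoint and sends no adjacent pair of positions to an adjacent pair, so neither stripping nor capping applies to it; and even when a cap exists it may collapse the relation to a tautology, as happens for the $3$-cycle itself, where $x_{i_1}x_{i_2}x_{i_3}=x_{i_2}x_{i_3}x_{i_1}$ admits only the cap $i_2=i_3$, which yields $x_{i_1}=x_{i_1}$. So the proposed induction ``strip and cap down to $S_2$ or $S_3$'' does not get started.

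The paper's resolution uses the group structure of $G$ in an essential way, beyond the diagrammatic moves applied to a single element. First, a commutator trick: if $\pi,\sigma\in S_\infty$ have supports overlapping in exactly one point $i$, then $\sigma^{-1}\pi^{-1}\sigma\pi$ is the $3$-cycle $(i,\sigma^{-1}(i),\pi^{-1}(i))$; applying this to $\tau'\otimes id$ and $id\otimes\tau''$, built from a nontrivial $\tau\in G$ by erasing outer strings on each side, produces a $3$-cycle inside $G$. Only for this very special element does your reduction work: erasing outer strings and pairs of adjacent fixed points brings the $3$-cycle to one of four normal forms in $S_3,S_4,S_5$, each of which yields the basic or the half\--liberated crossing (the $S_3$ form after composing it with a shifted copy of itself, or by your identity; the others by a single crossed cap). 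Without the passage to a $3$-cycle, or some substitute that likewise exploits products and commutators of elements of $G$, the parity dichotomy you describe cannot be implemented, so the missing idea is precisely this Step~1 of the paper's argument.
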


\begin{proof}
We follow the approach in \cite{ba1}, where the result was conjectured. We fix a monomial sphere $S\subset S^{N-1}_{\mathbb R,+}$, and we associate to it subsets $G_k\subset S_k$, as follows:
$$G_k=\left\{\sigma\in S_k\Big|x_{i_1}\ldots x_{i_k}=x_{i_{\sigma(1)}}\ldots x_{i_{\sigma(k)}},\forall (i_1,\ldots,i_k)\in\{1,\ldots,N\}^k\right\}$$

Since the relations of type $x_{i_1}\ldots x_{i_k}=x_{i_{\sigma(1)}}\ldots x_{i_{\sigma(k)}}$ can be composed and reversed, each $G_k$ is a group. Moreover, since we have $\sigma\in G_k\implies\sigma|\in G_{k+1}$, we can form the increasing union $G=(G_k)$, which is a subgroup of the increasing union $S_\infty=(S_k)$.

Since the relations $x_{i_1}\ldots x_{i_k}=x_{i_{\sigma(1)}}\ldots x_{i_{\sigma(k)}}$ can be concatenated as well, our group $G=(G_k)$ is ``filtered'', in the sense that it is stable under the operation $(\pi,\sigma)\to\pi\otimes\sigma$. Moreover, $G$ is stable under two more diagrammatic operations, as follows:

(1) Removing outer strings. Indeed, by summing over $a$, we have:
$$Xa=Ya\implies Xa^2=Ya^2\implies X=Y$$
$$aX=aY\implies a^2X=a^2Y\implies X=Y$$

(2) Removing neighboring strings. Indeed, once again by summing over $a$, we have:
$$XabY=ZabT\implies Xa^2Y=Za^2T\implies XY=ZT$$
$$XabY=ZbaT\implies Xa^2Y=Za^2T\implies XY=ZT$$

The problem is that of proving that the only such groups are $\{1\}\subset S_\infty^*\subset S_\infty$, where $S_\infty^*$ is the group associated to the half-liberated sphere $S^{N-1}_{\mathbb R,*}$. So, consider a filtered group $G\subset S_\infty$, assumed non-trivial, $G\neq\{1\}$, and satisfying the above conditions. 

\underline{Step 1.} Our first claim is that $G$ contains a 3-cycle. For this purpose, we use a standard trick, stating that if $\pi,\sigma\in S_\infty$ have support overlapping on exactly one point, say $supp(\pi)\cap supp(\sigma)=\{i\}$, then the commutator $\sigma^{-1}\pi^{-1}\sigma\pi$ is a 3-cycle, namely $(i,\sigma^{-1}(i),\pi^{-1}(i))$. Indeed the computation of the commutator goes as follows:
$$\xymatrix@R=7mm@C=5mm{\pi\\ \sigma\\ \pi^{-1}\\ \sigma^{-1}}\qquad
\xymatrix@R=6mm@C=5mm{\\ \\ =}\qquad
\xymatrix@R=5mm@C=5mm{
\circ&\circ\ar@{-}[drr]&\circ&\bullet\ar@{-}[dl]&\circ\ar@{.}[d]&\circ\ar@{-}[d]&\circ\ar@{.}[d]\\
\circ\ar@{.}[d]&\circ\ar@{.}[d]&\circ\ar@{-}[d]&\bullet\ar@{-}[dr]&\circ&\circ\ar@{-}[dll]&\circ\\
\circ&\circ&\circ\ar@{-}[dr]&\bullet\ar@{-}[dll]&\circ\ar@{-}[d]&\circ\ar@{.}[d]&\circ\ar@{.}[d]\\
\circ\ar@{.}[d]&\circ\ar@{-}[d]&\circ\ar@{.}[d]&\bullet\ar@{-}[drr]&\circ\ar@{-}[dl]&\circ&\circ\\
\circ&\circ&\circ&\bullet&\circ&\circ&\circ
}$$

Now let us pick a non-trivial element $\tau\in G$. By removing outer strings at right and at left we obtain permutations $\tau'\in G_k,\tau''\in G_s$ having a non-trivial action on their right/left leg, and by taking $\pi=\tau'\otimes id_{s-1},\sigma=id_{k-1}\otimes\tau''$, the trick applies.

\underline{Step 2.} Our second claim is $G$ must contain one of the following permutations:
$$\xymatrix@R=10mm@C=2mm{
\circ\ar@{-}[dr]&\circ\ar@{-}[dr]&\circ\ar@{-}[dll]\\
\circ&\circ&\circ}\qquad\quad
\xymatrix@R=10mm@C=2mm{
\circ\ar@{-}[drr]&\circ\ar@{.}[d]&\circ\ar@{-}[dr]&\circ\ar@{-}[dlll]\\
\circ&\circ&\circ&\circ}\qquad\quad
\xymatrix@R=10mm@C=2mm{
\circ\ar@{-}[dr]&\circ\ar@{-}[drr]&\circ\ar@{.}[d]&\circ\ar@{-}[dlll]\\
\circ&\circ&\circ&\circ}\qquad\quad
\xymatrix@R=10mm@C=2mm{
\circ\ar@{-}[drr]&\circ\ar@{.}[d]&\circ\ar@{-}[drr]&\circ\ar@{.}[d]&\circ\ar@{-}[dllll]\\
\circ&\circ&\circ&\circ&\circ}$$

Indeed, consider the 3-cycle that we just constructed. By removing all outer strings, and then all pairs of adjacent vertical strings, we are left with these permutations.

\underline{Step 3.} Our claim now is that we must have $S_\infty^*\subset G$. Indeed, let us pick one of the permutations that we just constructed, and apply to it our various diagrammatic rules. From the first permutation we can obtain the basic crossing, as follows:
$$\xymatrix@R=5mm@C=5mm{
\circ\ar@{-}[d]&\circ\ar@{-}[dr]&\circ\ar@{-}[dr]&\circ\ar@{-}[dll]\\
\circ\ar@{-}[dr]&\circ\ar@{-}[dr]&\circ\ar@{-}[dll]&\circ\ar@{-}[d]\\
\circ&\circ&\circ&\circ}\qquad
\xymatrix@R=5mm@C=5mm{
\\ \to\\}\qquad
\xymatrix@R=6mm@C=5mm{
\circ\ar@{-}[ddr]\ar@/^/@{.}[r]&\circ\ar@{-}[ddl]&\circ\ar@{-}[ddr]&\circ\ar@{-}[ddl]\\
\\
\circ\ar@/_/@{.}[r]&\circ&\circ&\circ}\qquad
\xymatrix@R=5mm@C=5mm{
\\ \to\\}\qquad
\xymatrix@R=6mm@C=5mm{
\circ\ar@{-}[ddr]&\circ\ar@{-}[ddl]\\
\\
\circ&\circ}$$

Also, by removing a suitable $\slash\hskip-2.1mm\backslash$ shaped configuration, which is represented by dotted lines in the diagrams below, we can obtain the basic crossing from the second and third permutation, and the half-liberated crossing from the fourth permutation:
$$\xymatrix@R=10mm@C=2mm{
\circ\ar@{.}[drr]&\circ\ar@{.}[d]&\circ\ar@{-}[dr]&\circ\ar@{-}[dlll]\\
\circ&\circ&\circ&\circ}\qquad\quad
\xymatrix@R=10mm@C=2mm{
\circ\ar@{-}[dr]&\circ\ar@{.}[drr]&\circ\ar@{.}[d]&\circ\ar@{-}[dlll]\\
\circ&\circ&\circ&\circ}\qquad\quad
\xymatrix@R=10mm@C=2mm{
\circ\ar@{.}[drr]&\circ\ar@{.}[d]&\circ\ar@{-}[drr]&\circ\ar@{-}[d]&\circ\ar@{-}[dllll]\\
\circ&\circ&\circ&\circ&\circ}$$

Thus, in all cases we have a basic or half-liberated crossing, and so $S_\infty^*\subset G$, as claimed.

\underline{Step 4.} Our last claim, which will finish the proof, is that there is no proper intermediate subgroup $S_\infty^*\subset G\subset S_\infty$. In order to prove this, we recall from \cite{ba1} that $S_\infty^*\subset S_\infty$ is the subgroup of  ``parity-preserving'' permutations ($i$ even $\implies$ $\sigma(i)$ even). Equivalently, $S_\infty^*\subset S_\infty$ is the subgroup generated by the transpositions $(1,3)$, $(2,4)$, $(3,5)$, \ldots

Now let us pick an element $\sigma\in S_k-S_k^*$, with $k\in\mathbb N$. We must prove that the group $G=<S_\infty^*,\sigma>$ equals the whole $S_\infty$. In order to do so, we use the fact that $\sigma$ is not parity preserving. Thus, we can find $i$ even such that $\sigma(i)$ is odd. 

In addition, up to passing to $\sigma|$, we can assume that $\sigma(k)=k$, and then, up to passing one more time to $\sigma|$, we can further assume that $k$ is even.

Since both $i,k$ are even we have $(i,k)\in S_k^*$, and so $\sigma(i,k)\sigma^{-1}=(\sigma(i),k)$ belongs to $G$. But, since $\sigma(i)$ is odd, by deleting an appropriate number of vertical strings, $(\sigma(i),k)$ reduces to the basic crossing $(1,2)$. Thus $G=S_\infty$, and we are done.
\end{proof}

Summarizing, we have now a complete axiomatization for the basic 3 spheres. In what follows we will prove some similar results, for some related geometric objects.

\section{Projective spaces}

We discuss here a ``projective version'' of the classification results in section 1. Our starting point is the following functional analytic description of $P^{N-1}_\mathbb R,P^{N-1}_\mathbb C$:

\begin{proposition}
We have presentation results as follows,
\begin{eqnarray*}
C(P^{N-1}_\mathbb R)&=&C^*_{comm}\left((p_{ij})_{i,j=1,\ldots,N}\Big|p=\bar{p}=p^t=p^2,Tr(p)=1\right)\\
C(P^{N-1}_\mathbb C)&=&C^*_{comm}\left((p_{ij})_{i,j=1,\ldots,N}\Big|p=p^*=p^2,Tr(p)=1\right)
\end{eqnarray*}
for the algebras of continuous functions on the real and complex projective spaces.
\end{proposition}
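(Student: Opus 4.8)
The plan is to use Gelfand duality, exactly in the spirit of the proof of Proposition 1.2. Both algebras on the right are by construction commutative unital $C^*$-algebras, so each has the form $C(X)$ for a compact space $X$, and the task reduces to identifying the Gelfand spectrum. First I would record well-definedness: in any $*$-representation the relation $p=p^*=p^2$ makes $p$ a projection, so $\|p\|\leq1$ and hence $\|p_{ij}\|\leq1$ for all $i,j$; thus the maximal $C^*$-norm is finite and the enveloping commutative algebra exists.

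Next, for $A=C^*_{comm}((p_{ij})\mid p=\bar p=p^t=p^2,\ Tr(p)=1)$, a character $\chi:A\to\mathbb C$ is completely determined by the scalar matrix $P=(\chi(p_{ij}))\in M_N(\mathbb C)$, and applying $\chi$ to the defining relations shows precisely that $P=\bar P=P^t=P^2$ and $Tr(P)=1$; conversely any such matrix defines a character. Hence $Spec(A)$ is exactly the set of real symmetric idempotent matrices of trace one.

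Then comes the one genuine computation, an elementary linear-algebra lemma: a self-adjoint idempotent $P$ is an orthogonal projection, its trace equals its rank, so $Tr(P)=1$ forces $rank(P)=1$; combined with $P=\bar P=P^t$ this gives $P=\xi\xi^t$ for some unit vector $\xi\in\mathbb R^N$, with $\xi$ and $-\xi$ yielding the same $P$. This sets up a bijection $Spec(A)\cong S^{N-1}_\mathbb R/\{\pm1\}=P^{N-1}_\mathbb R$, under which the generator $p_{ij}$ becomes the coordinate function $[\xi]\mapsto\xi_i\xi_j$. The map $\xi\mapsto\xi\xi^t$ is continuous on the compact sphere and descends to the quotient, and being a continuous bijection of compact Hausdorff spaces it is a homeomorphism; therefore $A\cong C(P^{N-1}_\mathbb R)$, as stated.

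Finally, the complex case is formally identical: dropping $p=\bar p$ and $p=p^t$ while keeping $p=p^*=p^2$ and $Tr(p)=1$, a character corresponds to a rank-one self-adjoint projection $P=\xi\xi^*$ with $\xi\in\mathbb C^N$ a unit vector, now determined only up to a phase, so $Spec(A)\cong S^{N-1}_\mathbb C/U(1)=P^{N-1}_\mathbb C$. The only points needing genuine care are the well-definedness remark and checking that the spectral bijection is a homeomorphism respecting the coordinate functions; the underlying linear algebra is routine. One could equivalently argue by exhibiting surjections in both directions, since the functions $\xi_i\xi_j$ on projective space satisfy the relations and separate lines, whence Stone--Weierstrass applies, but the direct identification of the spectrum is the cleanest route.
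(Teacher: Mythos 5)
Your proposal is correct and follows essentially the same route as the paper, whose proof is a one-line appeal to Gelfand and Stone--Weierstrass together with the identification of $P^{N-1}_\mathbb R$ and $P^{N-1}_\mathbb C$ with the rank-one projections in $M_N(\mathbb R)$ and $M_N(\mathbb C)$; you simply spell out the details (characters as trace-one self-adjoint idempotent matrices, the rank-one observation, and the compactness argument for the homeomorphism). The Stone--Weierstrass variant you mention at the end is the form in which the paper states it, so the two are in full agreement.
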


\begin{proof}
This follows indeed from the Gelfand and Stone-Weierstrass theorems, by using the fact that $P^{N-1}_\mathbb R,P^{N-1}_\mathbb C$ are the spaces of rank one projections in $M_N(\mathbb R),M_N(\mathbb C)$.
\end{proof}

The above result suggests the following definition:

\begin{definition}
Associated to any $N\in\mathbb N$ is the following universal algebra,
$$C(P^{N-1}_+)=C^*\left((p_{ij})_{i,j=1,\ldots,N}\Big|p=p^*=p^2,Tr(p)=1\right)$$
whose abstract spectrum is called ``free projective space''.
\end{definition}

Observe that we have embeddings of noncommutative compact spaces $P^{N-1}_\mathbb R\subset P^{N-1}_\mathbb C\subset P^{N-1}_+$, and that the complex projective space $P^{N-1}_\mathbb C$ is the classical version of $P^{N-1}_+$.

Given a closed subset $X\subset S^{N-1}_{\mathbb R,+}$, its projective version is by definition the quotient space $X\to PX$ determined by the fact that $C(PX)\subset C(X)$ is the subalgebra generated by the variables $p_{ij}=x_ix_j$. We have then the following result, from \cite{bgo}:

\begin{proposition}
The projective versions of the $3$ spheres are given by
$$\xymatrix@R=15mm@C=15mm{
S^{N-1}_\mathbb R\ar[r]\ar[d]&S^{N-1}_{\mathbb R,*}\ar[r]\ar[d]&S^{N-1}_{\mathbb R,+}\ar[d]\\
P^{N-1}_\mathbb R\ar[r]&P^{N-1}_\mathbb C\ar[r]&\mathcal P^{N-1}_+}$$
where $\mathcal P^{N-1}_+$ is a certain noncommutative compact space, contained in $P^{N-1}_+$.
\end{proposition}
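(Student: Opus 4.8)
The plan is to compute, in each of the three sphere algebras, the subalgebra generated by the elements $p_{ij}=x_ix_j$, and to match it with the relevant projective space algebra. The starting observation, valid in all three cases, is that the matrix $p=(p_{ij})$ satisfies the defining relations of $C(P^{N-1}_+)$. Indeed $p_{ij}^*=(x_ix_j)^*=x_jx_i=p_{ji}$ gives $p=p^*$; the relation $\sum_kx_k^2=1$ gives $(p^2)_{ij}=\sum_kx_ix_kx_kx_j=x_ix_j=p_{ij}$, that is $p=p^2$; and $Tr(p)=\sum_ix_i^2=1$. Hence for the free sphere there is a surjection $C(P^{N-1}_+)\to C(\mathcal P^{N-1}_+)$ onto the projective version, which is exactly the claimed embedding $\mathcal P^{N-1}_+\subset P^{N-1}_+$. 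This settles the rightmost column, with no identification of $\mathcal P^{N-1}_+$ and $P^{N-1}_+$ being required or expected.

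For the real sphere the coordinates commute and are self-adjoint, so each $p_{ij}$ is self-adjoint and $p_{ij}=p_{ji}$, giving the extra relations $p=\bar p=p^t$; the subalgebra is commutative, so I obtain a surjection $C(P^{N-1}_\mathbb R)\to C(PS^{N-1}_\mathbb R)$. That this is an isomorphism follows from Stone--Weierstrass: the functions $p_{ij}$ are the entries of the rank one projection onto $\mathbb Rx$, hence they separate the points of $P^{N-1}_\mathbb R=S^{N-1}_\mathbb R/\{\pm1\}$. Thus $PS^{N-1}_\mathbb R=P^{N-1}_\mathbb R$.

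The essential case is the half-liberated one, where the target must be $P^{N-1}_\mathbb C$ rather than $P^{N-1}_\mathbb R$, so the point is to produce a commutative but genuinely complex projective algebra. First I would check that the $p_{ij}$ commute, applying the half-commutation relation $x_ax_bx_c=x_cx_bx_a$ twice, to the triples $(j,k,l)$ and then $(i,l,k)$:
$$x_ix_jx_kx_l=x_i(x_jx_kx_l)=x_ix_lx_kx_j=(x_ix_lx_k)x_j=x_kx_lx_ix_j,$$
that is, $p_{ij}p_{kl}=p_{kl}p_{ij}$. Together with $p=p^*=p^2$ and $Tr(p)=1$, this commutativity yields a surjection $C(P^{N-1}_\mathbb C)\to C(PS^{N-1}_{\mathbb R,*})$, and it remains to see that it is faithful.

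The main obstacle is precisely this injectivity, which cannot be read off from the relations and requires a concrete model. Here I would invoke the matrix model of $S^{N-1}_{\mathbb R,*}$ from \cite{bgo}, \cite{bve}: letting $z=(z_i)$ run over the classical complex unit sphere, the assignment
$$x_i\longmapsto\begin{pmatrix}0&z_i\\ \bar z_i&0\end{pmatrix},\qquad p_{ij}=x_ix_j\longmapsto\begin{pmatrix}z_i\bar z_j&0\\ 0&\bar z_iz_j\end{pmatrix}$$
is well-defined, since the $z_i$ commute so that $z_i\bar z_jz_k=z_k\bar z_jz_i$, verifying half-commutation. Reading off the upper-left corner, the composite $C(P^{N-1}_\mathbb C)\to C(PS^{N-1}_{\mathbb R,*})\to C(S^{N-1}_\mathbb C)$ sends $p_{ij}\mapsto z_i\bar z_j$, which is exactly the standard embedding $C(P^{N-1}_\mathbb C)\hookrightarrow C(S^{N-1}_\mathbb C)$ dual to $S^{N-1}_\mathbb C\to S^{N-1}_\mathbb C/U(1)=P^{N-1}_\mathbb C$, and hence injective; this forces the first map to be injective, so $PS^{N-1}_{\mathbb R,*}=P^{N-1}_\mathbb C$. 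Finally, the diagram commutes because every map occurring is determined by $p_{ij}\mapsto x_ix_j$, so the compatibilities are automatic.
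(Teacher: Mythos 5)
Your proof is correct and follows essentially the same route as the paper: verify the relations $p=p^*=p^2$, $Tr(p)=1$ for the free sphere, derive commutativity of the $p_{ij}$ from two applications of half-commutation to get $PS^{N-1}_{\mathbb R,*}\subset P^{N-1}_\mathbb C$, and use the matrix model $x_i\mapsto\left(\begin{smallmatrix}0&z_i\\ \bar z_i&0\end{smallmatrix}\right)$ for the reverse inclusion. You merely make explicit some details the paper leaves implicit, namely the Stone--Weierstrass argument for $PS^{N-1}_\mathbb R=P^{N-1}_\mathbb R$ and the corner-projection argument showing that the matrix model forces injectivity of $C(P^{N-1}_\mathbb C)\to C(PS^{N-1}_{\mathbb R,*})$.
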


\begin{proof}
The assertion at left is true by definition. For the assertion at right, we have to prove that the variables $p_{ij}=z_iz_j$ over the free sphere $S^{N-1}_{\mathbb R,+}$ satisfy the defining relations for $C(P^{N-1}_+)$, from Definition 2.2, and the verification here goes as follows:
\begin{eqnarray*}
(p^*)_{ij}&=&p_{ji}^*=(z_jz_i)^*=z_iz_j=p_{ij}\\
(p^2)_{ij}&=&\sum_kp_{ik}p_{kj}=\sum_kz_iz_k^2z_j=z_iz_j=p_{ij}\\
Tr(p)&=&\sum_kp_{kk}=\sum_kz_k^2=1
\end{eqnarray*}

Regarding now the middle assertion, stating that we have $PS^{N-1}_{\mathbb R,*}=P^{N-1}_\mathbb C$, the inclusion ``$\subset$'' follows from the relations $abc=cba$, which imply $abcd=cbad=cbda$. In the other sense now, the point is that we have a matrix model representation, as follows:
$$\pi:C(S^{N-1}_{\mathbb R,*})\to M_2(C(S^{N-1}_\mathbb C))\quad:\quad x_i\to\begin{pmatrix}0&z_i\\ \bar{z}_i&0\end{pmatrix}$$ 

But this gives the missing inclusion ``$\supset$'', and we are done. See \cite{bgo}.
\end{proof}

The inclusion $\mathcal P^{N-1}_+\subset P^{N-1}_+$ is known to be quite similar to an isomorphism, algebrically speaking. To be more precise, when performing the GNS construction with respect to the canonical integration functionals, $\mathcal P^{N-1}_+\subset P^{N-1}_+$ becomes an isomorphism. See \cite{bgo}, \cite{bve}.

We can axiomatize our noncommutative projective spaces, as follows:

\begin{definition}
A monomial space is a subset $P\subset P^{N-1}_+$ obtained via relations of type
$$p_{i_1i_2}\ldots p_{i_{k-1}i_k}=p_{i_{\sigma(1)}i_{\sigma(2)}}\ldots p_{i_{\sigma(k-1)}i_{\sigma(k)}},\ \forall (i_1,\ldots,i_k)\in\{1,\ldots,N\}^k$$
with $\sigma$ ranging over a certain subset of $\bigcup_{k\in2\mathbb N}S_k$, stable under $\sigma\to|\sigma|$.
\end{definition}

Observe the similarity with Definition 1.3. The only subtlety in the projective case is the stability under $\sigma\to|\sigma|$, which in practice means that if the above relation associated to $\sigma$ holds, then the following relation, associated to $|\sigma|$, must hold as well:
$$p_{i_0i_1}\ldots p_{i_ki_{k+1}}=p_{i_0i_{\sigma(1)}}p_{i_{\sigma(2)}i_{\sigma(3)}}\ldots p_{i_{\sigma(k-2)}i_{\sigma(k-1)}}p_{i_{\sigma(k)}i_{k+1}}$$

As an illustration, the basic projective spaces are all monomial:

\begin{proposition}
The $3$ projective spaces are all monomial, with the permutations
$$\xymatrix@R=10mm@C=8mm{\circ\ar@{-}[dr]&\circ\ar@{-}[dl]\\\circ&\circ}\qquad\qquad\qquad
\xymatrix@R=10mm@C=3mm{\circ\ar@{-}[drr]&\circ\ar@{-}[drr]&\circ\ar@{-}[dll]&\circ\ar@{-}[dll]\\\circ&\circ&\circ&\circ}$$
producing respectively the spaces $P^{N-1}_\mathbb R,P^{N-1}_\mathbb C$.
\end{proposition}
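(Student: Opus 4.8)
The plan is to read off, for each displayed permutation, the monomial relation it imposes on $C(P^{N-1}_+)$ via Definition 2.5, and then to match the resulting quotient against the defining presentation of the relevant classical projective space. The free space $P^{N-1}_+$ is itself monomial in a trivial way, being produced by the empty set of permutations, which is vacuously stable under $\sigma\to|\sigma|$; so only the two remaining cases require work.

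For the complex case I would take the $4$-point permutation to be $\sigma=(1,3)(2,4)$ and, with $k=4$, read its relation as
$$p_{i_1i_2}p_{i_3i_4}=p_{i_{\sigma(1)}i_{\sigma(2)}}p_{i_{\sigma(3)}i_{\sigma(4)}}=p_{i_3i_4}p_{i_1i_2}.$$
This is precisely commutativity of the generators $p_{ij}$. Since $C(P^{N-1}_\mathbb C)=C^*_{comm}(\ldots)$ is by definition the maximal commutative quotient of $C(P^{N-1}_+)$, and every relation in the $|\cdot|$-closure of $\sigma$ holds automatically in a commutative algebra, the monomial space produced by $(1,3)(2,4)$ is $P^{N-1}_\mathbb C$.

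The real case is where I expect the main obstacle to lie. The basic crossing is $\sigma=(1,2)\in S_2$, and with $k=2$ its relation is $p_{i_1i_2}=p_{i_2i_1}$, that is $p=p^t$; combined with the standing relation $p=p^*$ this forces each $p_{ij}$ to be self-adjoint, so $p=\bar p=p^t=p^2$. The difficulty is that this symmetry does not obviously yield the commutative algebra $C(P^{N-1}_\mathbb R)$, and the resolution, which I believe is the crux, is that the stability under $\sigma\to|\sigma|$ contributes a second relation. Padding $(1,2)$ produces $p_{ab}p_{cd}=p_{ac}p_{bd}$ for all indices, and I would then derive commutativity from the chain
$$p_{cd}p_{ab}=p_{ca}p_{db}=p_{ac}p_{bd}=p_{ab}p_{cd},$$
in which the two outer equalities use the padded relation and the middle one uses $p=p^t$.

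Finally I would close the two inclusions of ideals by the universal property. On one hand, every relation in the $|\cdot|$-closure of $(1,2)$ holds on $P^{N-1}_\mathbb R$, where $p_{ij}=x_ix_j$ with commuting real coordinates makes these identities mere rearrangements; on the other hand, the computation above shows that these relations imply commutativity together with $p=\bar p=p^t=p^2$ and $Tr(p)=1$, which are exactly the relations presenting $C(P^{N-1}_\mathbb R)$. Hence the two quotients coincide. The analogous double inclusion, trivial on the commutative side, confirms the complex case, so the only genuine obstacle throughout is seeing that commutativity is a hidden consequence of symmetry, extracted through the $\sigma\to|\sigma|$ stability.
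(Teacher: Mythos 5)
Your overall strategy coincides with the paper's: read off the relation imposed by each permutation and by its shift $|\sigma|$, derive commutativity, and close the identification by a double surjection. The real case is handled exactly as in the paper: $(1,2)$ gives $p=p^t$, hence $p=\bar p$, the shifted relation $p_{ab}p_{cd}=p_{ac}p_{bd}$ combined with symmetry yields commutativity via the chain you wrote, and your explicit remark that all these relations hold on $P^{N-1}_\mathbb R$ because $p_{ij}=x_ix_j$ with commuting real coordinates correctly supplies the reverse surjection.

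The one genuine flaw is in the complex case, in the sentence ``every relation in the $|\cdot|$-closure of $\sigma$ holds automatically in a commutative algebra.'' That is false as stated. The shift of the fattened crossing imposes $p_{ab}p_{cd}p_{ef}=p_{ad}p_{eb}p_{cf}$, and the two sides are products of \emph{different} generators, so commutativity alone does not give this identity -- in the free commutative algebra on symbols $p_{ij}$ it simply fails. What is true, and what must be checked, is that it holds in $C(P^{N-1}_\mathbb C)$ itself: by Gelfand duality the universal commutative $C^*$-algebra with $p=p^*=p^2$, $Tr(p)=1$ is the algebra of functions on the rank-one projections, so $p_{ij}=z_i\bar z_j$ pointwise, and both sides equal the scalar $z_az_cz_e\bar z_b\bar z_d\bar z_f$. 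This is precisely the two-line verification the paper performs, and it is the same kind of check you did carry out (correctly) for $P^{N-1}_\mathbb R$. With that verification inserted, your proof is complete and matches the paper's.
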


\begin{proof}
We must divide the algebra $C(P^{N-1}_+)$ by the relations associated to the diagrams in the statement, as well as those associated to their shifted versions, given by:
$$\xymatrix@R=10mm@C=3mm{\circ\ar@{-}[d]&\circ\ar@{-}[dr]&\circ\ar@{-}[dl]&\circ\ar@{-}[d]\\\circ&\circ&\circ&\circ}\qquad\qquad\qquad 
\xymatrix@R=10mm@C=3mm{\circ\ar@{-}[d]&\circ\ar@{-}[drr]&\circ\ar@{-}[drr]&\circ\ar@{-}[dll]&\circ\ar@{-}[dll]&\circ\ar@{-}[d]\\\circ&\circ&\circ&\circ&\circ&\circ}$$ 

(1) The basic crossing, and its shifted version, produce the relations $p_{ab}=p_{ba}$ and $p_{ab}p_{cd}=p_{ac}p_{bd}$. Now by using these relations several times, we obtain:
$$p_{ab}p_{cd}=p_{ac}p_{bd}=p_{ca}p_{db}=p_{cd}p_{ab}$$

Thus, the space produced by the basic crossing is classical, $P\subset P^{N-1}_\mathbb C$, and by using one more time the relations $p_{ab}=p_{ba}$ we conclude that we have $P=P^{N-1}_\mathbb R$, as claimed.

(2) The fattened crossing, and its shifted version, produce the relations $p_{ab}p_{cd}=p_{cd}p_{ab}$ and $p_{ab}p_{cd}p_{ef}=p_{ad}p_{eb}p_{cf}$. The first relations tell us that the projective space must be classical, $P\subset P^{N-1}_\mathbb C$. Now observe that with $p_{ij}=z_i\bar{z}_j$, the second relations read:
$$z_a\bar{z}_bz_c\bar{z}_dz_e\bar{z}_f=z_a\bar{z}_dz_e\bar{z}_bz_c\bar{z}_f$$

Since these relations are automatic, we have $P=P^{N-1}_\mathbb C$, and we are done.
\end{proof}

We can now formulate our projective classification result, as follows:

\begin{theorem}
The projective spaces $P^{N-1}_\mathbb R\subset P^{N-1}_\mathbb C\subset P^{N-1}_+$ are the only monomial ones.
\end{theorem}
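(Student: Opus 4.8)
The plan is to mirror the proof of Theorem 1.4, replacing the filtered group of a monomial sphere by an analogous ``even'' object attached to the monomial space. Given a monomial space $P\subset P^{N-1}_+$, I would set, for each even $k$,
$$G_k=\left\{\sigma\in S_k\Big|p_{i_1i_2}\ldots p_{i_{k-1}i_k}=p_{i_{\sigma(1)}i_{\sigma(2)}}\ldots p_{i_{\sigma(k-1)}i_{\sigma(k)}},\ \forall(i_1,\ldots,i_k)\in\{1,\ldots,N\}^k\right\}.$$
Since the defining relations can be composed, reversed and concatenated, each $G_k$ is a group, and $G=(G_k)_{k\in2\mathbb N}$ is stable under $\sigma\to\pi\otimes\sigma$ and, by hypothesis, under the projective shift $\sigma\to|\sigma|$. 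In addition, using $p^2=p$ and $Tr(p)=1$ in place of $\sum_ix_i^2=1$, one gets string-removal rules: summing over a repeated index contracts two adjacent factors ($\sum_ap_{\cdot a}p_{a\cdot}=p_{\cdot\cdot}$), and collapsing a factor with equal indices removes it ($\sum_ap_{aa}=1$). A direct check, exactly as in the proof of Proposition 2.5, identifies the three target spaces with the three groups $\{1\}\subset S_k^*\subset S_k$ (the parity-preserving, resp. all, permutations for $k$ even): these produce $P^{N-1}_+,P^{N-1}_\mathbb C,P^{N-1}_\mathbb R$ respectively.

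With this dictionary in place, the argument runs through the four steps of Theorem 1.4. If $G=\{1\}$ then $P=P^{N-1}_+$. Otherwise I would first produce a nontrivial short permutation by the commutator trick of Step 1, reduce it via the string-removal rules (Step 2), and extract from it a crossing (Step 3). The outcome should be that $G$ always contains the fattened crossing $(1,3)(2,4)$, giving the relation $p_{ab}p_{cd}=p_{cd}p_{ab}$ and hence $S_k^*\subseteq G_k$, that is $P\subseteq P^{N-1}_\mathbb C$. Finally, as in Step 4, I would show there is no proper intermediate family between the parity-preserving groups and the full symmetric groups: picking $\sigma\in S_k\setminus S_k^*$, conjugating a transposition $(i,k)\in S_k^*$ by $\sigma$ and deleting vertical strings in pairs should yield the basic crossing $(1,2)$, which is not parity-preserving and therefore forces $G_k=S_k$, i.e.\ $P=P^{N-1}_\mathbb R$. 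Note that, since the projective shift is $|\sigma|=id_1\otimes\sigma\otimes id_1$, the family $G$ is exactly the even part of a filtered group in the sense of Theorem 1.4; this gives an alternative route, deducing the classification from Theorem 1.4 applied to the filtered group generated by $\bigcup_kG_k$, provided one checks that this generation creates no new even relations.

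The main obstacle is precisely this parity constraint. In the affine proof one freely removes a single outer string, which changes the length by one, whereas here every admissible operation must preserve the evenness of $k$: outer strings can only be removed in pairs (via $Tr(p)=1$), and the shift adds a string at each end simultaneously. I therefore expect the delicate points to be re-engineering Step 1 so that the $3$-cycle is obtained within the even levels, and carrying out the reduction of Step 4 under the two-sided shift --- in particular arranging a fixed point at the right end while keeping $k$ even before conjugating. Once these even-preserving versions of ``removing outer strings'' and ``normalizing by shifts'' are established, the remaining combinatorics is identical to the affine case, and the three spaces $P^{N-1}_\mathbb R\subset P^{N-1}_\mathbb C\subset P^{N-1}_+$ emerge as the only possibilities.
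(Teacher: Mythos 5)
Your overall strategy (attach a permutation-group invariant to a monomial space and reduce to the affine classification of Theorem 1.4) is the right one, and you have correctly located the crux: every operation available at the even levels alone changes $k$ by $2$, while the affine proof leans heavily on removing a \emph{single} outer string. But you do not resolve this; you explicitly leave the even-level reworking of Steps 1 and 4 as an expectation, and your fallback (apply Theorem 1.4 to the filtered group generated by $\bigcup_kG_k$, ``provided one checks that this generation creates no new even relations'') begs the question -- that generated group is not visibly stable under the string-removal operations, and controlling its even part is exactly the difficulty. So as written the proposal has a genuine gap at its central step.

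The paper closes this gap with one specific device that is absent from your proposal: it defines $G_k$ at \emph{odd} levels as well, by declaring $\sigma\in G_k$ (for $k$ odd) when the relations $\mathcal R_{|\sigma}$ associated to the one-sided shift $|\sigma\in S_{k+1}$ hold over $P$. Since $|\sigma$ lives at an even level, these relations make sense for the projective coordinates, and the resulting $G=(G_k)$ is a genuine filtered subgroup of $S_\infty$, stable under removing outer strings and removing neighboring strings (the stability of the defining relations under $\sigma\to|\sigma|$ from Definition 2.4 is what makes the odd and even levels compatible). With that in hand, Steps 1--4 of Theorem 1.4 apply verbatim -- no parity constraint remains -- and restricting the three possible outcomes $\{1\}\subset S_\infty^*\subset S_\infty$ to the even levels yields the three projective spaces via Proposition 2.5. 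If you want to complete your argument, this interpolation of the odd levels is the missing idea; the alternative of redoing the whole combinatorial reduction inside $\bigcup_{k\in2\mathbb N}S_k$ would require substantially new case analysis (your Step 2 configurations occur at levels of mixed parity) and is not ``identical to the affine case'' as claimed.
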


\begin{proof}
We follow the proof from the affine case. Let $\mathcal R_\sigma$ be the collection of relations associated to a permutation $\sigma\in S_k$ with $k\in 2\mathbb N$, as in Definition 2.4. We fix a monomial projective space $P\subset P^{N-1}_+$, and we associate to it subsets $G_k\subset S_k$, as follows:
$$G_k=\begin{cases}
\{\sigma\in S_k|\mathcal R_\sigma\ {\rm hold\ over\ }P\}&(k\ {\rm even})\\
\{\sigma\in S_k|\mathcal R_{|\sigma}\ {\rm hold\ over\ }P\}&(k\ {\rm odd})
\end{cases}$$

As in the affine case, we obtain in this way a filtered group $G=(G_k)$, which is stable under removing outer strings, and under removing neighboring strings. Thus the computations in the proof of Theorem 1.4 apply, and show that we have only 3 possible situations, corresponding to the 3 projective spaces in Proposition 2.3 above.
\end{proof}

\section{Quantum isometries}

We discuss now the quantum isometry groups of the spheres and projective spaces. Consider the free orthogonal quantum group $O_N^+$, with standard coordinates denoted $u_{ij}$. Consider as well the subgroup $O_N^*\subset O_N^+$ obtained by assuming that the standard coordinates $u_{ij}$ satisfy the half-commutation relations $abc=cba$. See \cite{bsp}, \cite{bve}.

Given a closed subgroup $G\subset O_N^+$, its projective version $G\to PG$ is by definition given by the fact that $C(PG)\subset C(G)$ is the subalgebra generated by the variables $w_{ij,ab}=u_{ia}u_{jb}$. In the classical case we recover in this way the usual projective version, $PG=G/(G\cap\mathbb Z_2^N)$. It is also known that we have $PO_N^*=PU_N$. See \cite{bve}. 

We use the following action formalism, inspired from \cite{gos}, \cite{wan}:

\begin{definition}
Consider a closed subgroup $G\subset O_N^+$, and a closed subset $X\subset S^{N-1}_{\mathbb R,+}$.
\begin{enumerate}
\item We write $G\curvearrowright X$ when the formula $\Phi(z_i)=\sum_au_{ia}\otimes z_a$ defines a morphism of $C^*$-algebras $\Phi:C(X)\to C(G)\otimes C(X)$.

\item We write $PG\curvearrowright PX$ when the formula $\Phi(z_iz_j)=\sum_au_{ia}u_{jb}\otimes z_az_b$ defines a morphism of $C^*$-algebras $\Phi:C(PX)\to C(PG)\otimes C(PX)$.
\end{enumerate}
\end{definition}

Observe that the above morphisms $\Phi$, if they exist, are automatically coaction maps. Observe also that an affine action $G\curvearrowright X$ produces a projective action $PG\curvearrowright PX$. Finally, let us mention that given an algebraic subset $X\subset S^{N-1}_{\mathbb R,+}$, it is routine to prove that there exist universal quantum groups $G\subset O_N^+$ acting as (1), and as in (2).

We have the following result, with respect to the above notions:

\begin{theorem}
The quantum isometry groups of the spheres and projective spaces are
$$\xymatrix@R=15mm@C=15mm{
O_N\ar[r]\ar[d]&O_N^*\ar[r]\ar[d]&O_N^+\ar[d]\\
PO_N\ar[r]&PU_N\ar[r]&PO_N^+}$$
with respect to the affine and projective action notions introduced above.
\end{theorem}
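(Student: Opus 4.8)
The plan is to treat the six quantum isometry groups one at a time, establishing the affine row directly and then deriving the projective row from it. In each case I would construct the universal quantum group (or quotient of $PO_N^+$) for which the candidate coaction of Definition 3.1 is a well-defined $*$-homomorphism, by spelling out what it means for $\Phi$ to preserve each monomial relation defining the space and reading off the resulting relations on the entries $u_{ij}$.

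\textbf{Affine row.} For $S^{N-1}_{\mathbb R,+}$ nothing is imposed beyond the sphere relations: $\Phi(x_i)^*=\Phi(x_i)$ holds because $u=\bar u$, while $\sum_i\Phi(x_i)^2=1$ follows from $u^tu=1$, so every $G\subset O_N^+$ acts and the universal one is $O_N^+$. For $S^{N-1}_\mathbb R$ I would force $\Phi$ to preserve $x_ix_j=x_jx_i$, and for $S^{N-1}_{\mathbb R,*}$ to preserve $x_ix_jx_k=x_kx_jx_i$; expanding $\Phi$ of both sides, the relations to be imposed on $C(G)$ are those placing the basic crossing in $End(u^{\otimes2})$, respectively the half-liberated crossing in $End(u^{\otimes3})$, i.e. $u_{ia}u_{jb}=u_{jb}u_{ia}$ giving $O_N$, respectively $u_{ia}u_{jb}u_{kc}=u_{kc}u_{jb}u_{ia}$ giving $O_N^*$. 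The delicate point is that the monomials $z_az_b$ (resp. $z_az_bz_c$) are not independent in $C(X)$, so comparing coefficients yields only the symmetrized relations, such as $[u_{ia},u_{jb}]+[u_{ib},u_{ja}]=0$ in the classical case; to upgrade these to the full relations I would contract against suitable entries and sum, using $uu^t=u^tu=1$ to annihilate the unwanted middle factors. This orthogonality-driven collapse is the technical core of the affine part, and it matches the computations of \cite{bgo}.

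\textbf{Projective row.} Here I would use that an affine action $G\curvearrowright X$ automatically induces a projective action $PG\curvearrowright PX$, together with the identifications $PS^{N-1}_\mathbb R=P^{N-1}_\mathbb R$, $PS^{N-1}_{\mathbb R,*}=P^{N-1}_\mathbb C$, $PS^{N-1}_{\mathbb R,+}=\mathcal P^{N-1}_+$ of Proposition 2.3 and the equality $PO_N^*=PU_N$ recalled at the start of the section. These already produce the actions of $PO_N$, $PU_N$ and $PO_N^+$ on the three projective spaces. For universality I would run the affine analysis one level up, rewriting the defining relations of each projective space (the crossings of Proposition 2.5) as relations among the projective coordinates $w_{ij,ab}=u_{ia}u_{jb}$ and checking that they cut $PO_N^+$ down to exactly the projective version of the corresponding affine quantum group.

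The main obstacle is the orthogonality-based passage from the symmetrized identities to the full (half-)commutation relations, most involved in the half-liberated case, where one must recover $u_{ia}u_{jb}u_{kc}=u_{kc}u_{jb}u_{ia}$ from its symmetrization. A secondary difficulty is the universality step in the projective setting, namely ruling out projective actions that are not projectivizations of affine ones.
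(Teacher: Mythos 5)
Your overall architecture is workable and close to the paper's, but the key technical device is missing, and the substitute you propose in its place does not work. As you correctly observe, comparing coefficients in $\Phi$ only yields \emph{symmetrized} relations, e.g. $[u_{ia},u_{jb}]+[u_{ib},u_{ja}]=0$ in the real case and $[u_{jb},u_{ka},u_{ld}]+[u_{jd},u_{ka},u_{lb}]=0$ in the half-liberated case, because the monomials $z_az_b$ (resp. $p_{ab}p_{cd}$) are linearly dependent. You propose to split these into the individual relations by contracting against suitable entries and summing, "using $uu^t=u^tu=1$ to annihilate the unwanted middle factors". In a noncommutative algebra orthogonality can only strip an \emph{outermost} factor (this is exactly how the paper removes the leading $u_{ia}$ in the $P^{N-1}_\mathbb C$ computation, multiplying on the left by $u_{ia}$ and summing over $i$); it cannot reach middle factors, and it cannot separate the two terms of the symmetrized identity, since the obstruction is precisely that one cannot yet move entries past each other. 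The device that actually does the job --- and which you never mention --- is the antipode: since $S$ is an anti-homomorphism with $S(u_{ij})=u_{ji}$, applying it to the symmetrized relation and relabelling indices produces a second symmetrized relation with the roles of the index pairs exchanged, and combining the two forces each commutator (resp. half-commutator) to vanish separately. This antipode/relabel trick of \cite{bhg} is the technical core of the proof; without it your argument stalls at the symmetrized relations.

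Two smaller remarks. First, the paper runs the logic in the opposite direction from yours: it proves universality directly for the projective actions on $P^{N-1}_\mathbb R$ and $P^{N-1}_\mathbb C$, showing that the underlying quantum group $G\subset O_N^+$ must lie in $O_N$, resp. $O_N^*$, and the affine statements then follow since an affine action induces a projective one; your order (affine first, then projective) forces you to redo essentially the same computation at the projective level anyway, as you concede. Second, your "secondary difficulty" of ruling out projective actions that are not projectivizations of affine ones does not arise for this theorem: Definition 3.1(2) only quantifies over $PG$ with $G\subset O_N^+$, and the genuinely general question for arbitrary $H\subset PO_N^+$ is explicitly left open at the end of Section 3.
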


\begin{proof}
The fact that the 3 quantum groups on top act affinely on the corresponding 3 spheres is known since \cite{bgo}, and is elementary. By restriction, the 3 quantum groups on the bottom follow to act on the corresponding 3 projective spaces.

We must prove now that all these actions are universal. At right there is nothing to prove, so we are left with studying the actions on $S^{N-1}_\mathbb R,S^{N-1}_{\mathbb R,*}$ and on $P^{N-1}_\mathbb R,P^{N-1}_\mathbb C$.

\underline{$S^{N-1}_\mathbb R$.} Here the fact that the action $O_N\curvearrowright S^{N-1}_\mathbb R$ is universal is known from \cite{bhg}, and follows as well from the fact that the action $PO_N\curvearrowright P^{N-1}_\mathbb R$ is universal, proved below.

\underline{$S^{N-1}_{\mathbb R,*}$.} The situation is similar here, with the universality of $O_N^*\curvearrowright S^{N-1}_{\mathbb R,*}$ being proved in \cite{ba2}, and following as well from the universality of $PU_N\curvearrowright P^{N-1}_\mathbb C$, proved below.

\underline{$P^{N-1}_\mathbb R$.} In terms of the projective coordinates $w_{ia,jb}=u_{ia}u_{jb}$ and $p_{ij}=z_iz_j$, the coaction map is given by $\Phi(p_{ij})=\sum_{ab}w_{ia,jb}\otimes p_{ab}$, and we have:
\begin{eqnarray*}
\Phi(p_{ij})&=&\sum_{a<b}(w_{ij,ab}+w_{ij,ba})\otimes p_{ab}+\sum_aw_{ij,aa}\otimes p_{aa}\\
\Phi(p_{ji})&=&\sum_{a<b}(w_{ji,ab}+w_{ji,ba})\otimes p_{ab}+\sum_aw_{ji,aa}\otimes p_{aa}
\end{eqnarray*}

By comparing these two formulae, and then by using the linear independence of the variables $p_{ab}=z_az_b$ for $a\leq b$, we conclude that we must have:
$$w_{ij,ab}+w_{ij,ba}=w_{ji,ab}+w_{ji,ba}$$

Let us apply now the antipode to this formula. For this purpose, observe first that we have $S(w_{ij,ab})=S(u_{ia}u_{jb})=S(u_{jb})S(u_{ia})=u_{bj}u_{ai}=w_{ba,ji}$. Thus by applying the antipode we obtain $w_{ba,ji}+w_{ab,ji}=w_{ba,ij}+w_{ab,ij}$, and by relabelling, we obtain: 
$$w_{ji,ba}+w_{ij,ba}=w_{ji,ab}+w_{ij,ab}$$

Now by comparing with the original relation, we obtain $w_{ij,ab}=w_{ji,ba}$. But, with $w_{ij,ab}=u_{ia}u_{jb}$, this formula reads $u_{ia}u_{jb}=u_{jb}u_{ia}$. Thus our quantum group $G\subset O_N^+$ must be classical, $G\subset O_N$, and so we have $PG\subset PO_N$, as claimed.

\underline{$P^{N-1}_\mathbb C$.} Consider a coaction map, written as $\Phi(p_{ij})=\sum_{ab}u_{ia}u_{jb}\otimes p_{ab}$, with $p_{ab}=z_a\bar{z}_b$. The idea here will be that of using the formula $p_{ab}p_{cd}=p_{ad}p_{cb}$. We have:
\begin{eqnarray*}
\Phi(p_{ij}p_{kl})&=&\sum_{abcd}u_{ia}u_{jb}u_{kc}u_{ld}\otimes p_{ab}p_{cd}\\
\Phi(p_{il}p_{kj})&=&\sum_{abcd}u_{ia}u_{ld}u_{kc}u_{jb}\otimes p_{ad}p_{cb}
\end{eqnarray*}

The terms at left being equal, and the last terms at right being equal too, we deduce that, with $[a,b,c]=abc-cba$, we must have the following formula:
$$\sum_{abcd}u_{ia}[u_{jb},u_{kc},u_{ld}]\otimes p_{ab}p_{cd}=0$$

Now since the quantities $p_{ab}p_{cd}=z_a\bar{z}_bz_c\bar{z}_d$ at right depend only on the numbers $|\{a,c\}|,|\{b,d\}|\in\{1,2\}$, and this dependence produces the only possible linear relations between the variables $p_{ab}p_{cd}$, we are led to $2\times2=4$ equations, as follows:

(1) $u_{ia}[u_{jb},u_{ka},u_{lb}]=0$, $\forall a,b$.

(2) $u_{ia}[u_{jb},u_{ka},u_{ld}]+u_{ia}[u_{jd},u_{ka},u_{lb}]=0$, $\forall a$, $\forall b\neq d$.

(3) $u_{ia}[u_{jb},u_{kc},u_{lb}]+u_{ic}[u_{jb},u_{ka},u_{lb}]=0$, $\forall a\neq c$, $\forall b$.

(4) $u_{ia}[u_{jb},u_{kc},u_{ld}]+u_{ia}[u_{jd},u_{kc},u_{lb}]+u_{ic}[u_{jb},u_{ka},u_{ld}]+u_{ic}[u_{jd},u_{ka},u_{lb}]=0$, $\forall a\neq c$, $\forall b\neq d$.

We will need in fact only the first two formulae. Since (1) corresponds to (2) at $b=d$, we conclude that (1,2) are equivalent to (2), with no restriction on the indices. By multiplying now this formula to the left by $u_{ia}$, and then summing over $i$, we obtain:
$$[u_{jb},u_{ka},u_{ld}]+[u_{jd},u_{ka},u_{lb}]=0$$

We use now the antipode/relabel trick from \cite{bhg}. By applying the antipode we obtain $[u_{dl},u_{ak},u_{bj}]+[u_{bl},u_{ak},u_{dj}]=0$, and by relabelling we obtain:
$$[u_{ld},u_{ka},u_{jb}]+[u_{jd},u_{ka},u_{lb}]=0$$

Now by comparing with the original relation, we obtain $[u_{jb},u_{ka},u_{ld}]=[u_{jd},u_{ka},u_{lb}]=0$. Thus our quantum group is half-classical, $G\subset O_N^*$, and we are done.
\end{proof}

The above results can be probably further improved. As an example here, let us say that a closed subgroup $G\subset U_N^+$ acts projectively on $PX$ when we have a coaction map $\Phi(z_iz_j)=\sum_{ab}u_{ia}u_{jb}^*\otimes z_az_b$. Then the above proof can be adapted, by putting $*$ signs where needed, and so Theorem 3.2 still holds, under this more general formalism. However, establishing the most general universality results, involving arbitrary subgroups $H\subset PO_N^+$, looks like a quite non-trivial question, and we have no results here.

\section{Projective easiness}

We discuss here the analogues of the classification results in sections 1-2, for the quantum groups introduced in section 3. First, we have the following key result, from \cite{bbc}:

\begin{proposition}
We have the following results:
\begin{enumerate}
\item The quantum group inclusion $O_N\subset O_N^*$ is maximal.

\item The group inclusion $PO_N\subset PU_N$ is maximal.
\end{enumerate}
\end{proposition}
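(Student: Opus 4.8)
The plan is to prove the classical statement (2) first, by Lie-theoretic means, and then to deduce the quantum statement (1) from it by passing to projective versions. The bridge is the identity $PO_N^*=PU_N$ recalled in Section 3: an intermediate quantum group $O_N\subset G\subset O_N^*$ projects to a chain $PO_N\subset PG\subset PU_N$, and since $PU_N$ is a classical group, its quantum subgroup $PG$ is classical as well, because $C(PG)$ is a quotient of the commutative algebra $C(PU_N)$, hence commutative. Thus (2) pins $PG$ down to be either $PO_N$ or $PU_N$, and it then remains to recover $G$ from $PG$.

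For (2) I would argue at the level of Lie algebras. Any intermediate closed subgroup $PO_N\subset H\subset PU_N$ has $\mathrm{Lie}(PO_N)=\mathfrak{so}_N$ sitting inside $\mathrm{Lie}(PU_N)=\mathfrak{su}_N$, and the key point is that $(\mathfrak{su}_N,\mathfrak{so}_N)$ is the irreducible symmetric pair of type $AI$, attached to the symmetric space $SU_N/SO_N$. For an irreducible orthogonal symmetric Lie algebra the isotropy subalgebra is maximal, since any subalgebra strictly containing it must contain a nonzero submodule of the irreducible isotropy module $\mathfrak p$, hence all of $\mathfrak p$, hence everything. Therefore $\mathrm{Lie}(H)$ equals $\mathfrak{so}_N$ or $\mathfrak{su}_N$. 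In the second case $H$ contains the identity component $(PU_N)_0=PU_N$, so $H=PU_N$. In the first case $H$ has the identity component of $PO_N$ and normalizes it, so $H$ is squeezed between $PO_N$ and $N_{PU_N}\big((PO_N)_0\big)$; computing $N_{U_N}(SO_N)=O_N\cdot U(1)$ shows this normalizer is exactly $PO_N$, whence $H=PO_N$. This component bookkeeping, together with the direct check of the small cases, is the only delicate point of (2).

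For (1) the reduction above leaves a fiber-rigidity statement: one must show that an intermediate $G$ is determined by $PG$. Since $\pm I\in O_N\subset G$, the central automorphism $u_{ij}\mapsto -u_{ij}$ descends to $C(G)$, so the surjection $C(O_N^*)\to C(G)$ is graded by the parity of the word length in the generators; its degree-$0$ part is $C(PG)$ and its degree-$1$ part carries the odd generators $u_{ij}$. The cleanest route is then to use the faithful $2\times2$ matrix model $C(O_N^*)\hookrightarrow M_2(C(U_N))$ given by $u_{ij}\mapsto\begin{pmatrix}0&v_{ij}\\ \bar v_{ij}&0\end{pmatrix}$, in exact analogy with the sphere model of Proposition 2.3. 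Here even elements land on the diagonal and odd generators off the diagonal, and the constraint $PG=PO_N$ forces the off-diagonal entries to become \emph{real}, in which case $u_{ij}\mapsto\begin{pmatrix}0&v_{ij}\\ v_{ij}&0\end{pmatrix}$ and products $\pi(u_{ia})\pi(u_{jb})=v_{ia}v_{jb}I$ are scalar, hence commute; faithfulness of the model then gives $G\subset O_N$ and so $G=O_N$. In the remaining case $PG=PU_N$ the map is already an isomorphism on the degree-$0$ part, and since $G\supset O_N$ keeps the $u_{ij}$ nonzero, faithfulness on the even part propagates through the model to the odd part, giving $G=O_N^*$.

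I expect this fiber analysis to be the main obstacle. A purely formal attempt leads only to the relation $C\,u_{kc}=-u_{kc}\,C$ for the commutator $C=u_{ia}u_{jb}-u_{jb}u_{ia}$, which does not force $C=0$ on its own, as the model shows; the extra input distinguishing the real from the complex even part is exactly what breaks the symmetry and makes $O_N$, respectively $O_N^*$, the unique preimage. Consequently (1) does not follow formally from (2), but genuinely requires the faithfulness of the $2\times2$ model and the resulting control of the odd, noncommutative part of $C(O_N^*)$; securing and exploiting that faithfulness is where the real work lies.
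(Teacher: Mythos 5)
The paper itself does not prove this proposition: it defers both parts to \cite{bbc}, whose strategy is exactly the two-step one you outline (establish (2) by Lie theory, then lift to (1)). Your part (2) is essentially that argument and is sound: $(\mathfrak{su}_N,\mathfrak{so}_N)$ is an irreducible orthogonal symmetric pair, complete reducibility of the isotropy module pins $\mathrm{Lie}(H)$ down to $\mathfrak{so}_N$ or $\mathfrak{su}_N$, and the normalizer computation $N_{U_N}(SO_N)=U(1)\cdot O_N$ handles the component bookkeeping. The reduction of (1) to (2) via $PO_N^*=PU_N$ and the $\mathbb Z_2$-grading coming from the character of $-I\in O_N\subset G$ is also the right setup.

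The genuine gap is in how you close the two fibers in (1). The mechanism you invoke --- faithfulness of the $2\times2$ model, ``the constraint $PG=PO_N$ forces the off-diagonal entries to become real'', ``faithfulness on the even part propagates through the model to the odd part'' --- is asserted rather than proved (the model has fixed entries $v_{ij}$, so nothing is ``forced to become real''), and your closing paragraph concedes that you do not see why either fiber is rigid. In fact no matrix model is needed; both cases follow from the graded ideal $I=I_0\oplus I_1$ with $I_0=\ker\bigl(C(PU_N)\to C(PG)\bigr)$, where $I=\ker\bigl(C(O_N^*)\to C(G)\bigr)$. If $PG=PO_N$, then $u_{ia}u_{jb}-u_{jb}u_{ia}$ is an \emph{even} element of $C(G)$, hence lies in $C(PG)\cong C(PO_N)$, where its image $v_{ia}v_{jb}-v_{jb}v_{ia}$ vanishes by commutativity; thus $G\subset O_N$ and $G=O_N$. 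This also dissolves your worry about $Cu_{kc}=-u_{kc}C$ not forcing $C=0$: one does not need that relation, only that $C$ is even and dies in the projective quotient. If $PG=PU_N$, then $I_0=0$, and for $x\in I_1$ each product $xu_{ij}$ is an even element of $I$, hence zero, so
$$x=x\sum_j u_{ij}u_{ij}=\sum_j (xu_{ij})u_{ij}=0$$
by orthogonality of $u$; thus $I=0$ and $G=O_N^*$. With these two substitutions your outline becomes a complete proof along the lines of \cite{bbc}; as written, the key lifting step is missing.
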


\begin{proof}
The idea here is that (2) can be obtained by using standard Lie group tricks, and (1) follows then from it, via standard algebraic lifting results. See \cite{bbc}.
\end{proof}

Our claim now is that, under suitable assumptions, $O_N^*$ is the only intermediate object $O_N\subset G\subset O_N^+$, and $PU_N$ is the only intermediate object $PO_N\subset G\subset PO_N^+$. In order to formulate a precise statement here, we recall the following notion, from \cite{bsp}:

\begin{definition}
An intermediate quantum group $O_N\subset G\subset O_N^+$ is called easy when 
$$span(NC_2(k,l))\subset Hom(u^{\otimes k},u^{\otimes l})\subset span(P_2(k,l))$$
comes via $Hom(u^{\otimes k},u^{\otimes l})=span(D(k,l))$, for certain sets of pairings $D(k,l)$.
\end{definition}

As explained in \cite{bsp}, by ``saturating'' the sets $D(k,l)$, we can assume that the collection $D=(D(k,l))$ is a category of pairings, in the sense that it is stable under vertical and horizontal concatenation, upside-down turning, and contains the semicircle. See \cite{bsp}.

In the projective case now, we have the following related definition:

\begin{definition}
A projective category of pairings is a collection of subsets 
$$NC_2(2k,2l)\subset E(k,l)\subset P_2(2k,2l)$$
stable under the usual categorical operations, and satisfying $\sigma\in E\implies |\sigma|\in E$.
\end{definition}

As basic examples here, we have the categories $NC_2\subset P_2^*\subset P_2$, where $P_2^*$ is the category of matching pairings. This follows indeed from definitions.

Now with the above notion in hand, we can formulate:

\begin{definition}
A quantum group $PO_N\subset H\subset PO_N^+$ is called projectively easy when 
$$span(NC_2(2k,2l))\subset Hom(v^{\otimes k},v^{\otimes l})\subset span(P_2(2k,2l))$$
comes via $Hom(v^{\otimes k},v^{\otimes l})=span(E(k,l))$, for a certain projective category $E=(E(k,l))$.
\end{definition}

Observe that, given any easy quantum group $O_N\subset G\subset O_N^+$, its projective version $PO_N\subset PG\subset PO_N^+$ is projectively easy in our sense. In particular the quantum groups $PO_N\subset PU_N\subset PO_N^+$ are all projectively easy, coming from $NC_2\subset P_2^*\subset P_2$.

We have in fact the following general result:

\begin{proposition}
We have a bijective correspondence between the affine and projective categories of partitions, given by $G\to PG$ at the quantum group level.
\end{proposition}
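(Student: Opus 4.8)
The plan is to make the correspondence completely explicit at the level of categories of pairings and then verify bijectivity by reconstructing the affine data from the projective data. On the quantum group side, the projective version $PG$ of an easy quantum group $G$ has fundamental corepresentation $v$ with $v_{ij,ab}=u_{ia}u_{jb}$, so that $v^{\otimes k}$ is carried by $(\mathbb C^N)^{\otimes 2k}$, and since the $v$-coaction is the restriction of the $u^{\otimes 2}$-coaction we have $Hom(v^{\otimes k},v^{\otimes l})=Hom(u^{\otimes 2k},u^{\otimes 2l})$. For $G$ easy with affine category $D$, the right-hand side equals $span(D(2k,2l))$, so I would define the correspondence by $E(k,l)=D(2k,2l)$, the ``even-even part'' of $D$. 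As a sanity check this matches the known examples: the affine categories $P_2,P_2^*,NC_2$ map to the projective categories $P_2,P_2^*,NC_2$, in accordance with $O_N,O_N^*,O_N^+\to PO_N,PU_N,PO_N^+$.

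First I would verify that $D\mapsto E$ is well defined, i.e. that $E(k,l)=D(2k,2l)$ is a projective category in the sense of Definition 4.3. The inclusions $NC_2(2k,2l)\subset E(k,l)\subset P_2(2k,2l)$ and stability under the usual categorical operations are inherited directly from $D$. The only point needing care is the shift axiom $\sigma\to|\sigma|$: here $|\sigma|=|\otimes\sigma\otimes|$ is obtained by adding a single through-string on each side, landing in $D(2k+2,2l+2)=E(k+1,l+1)$. Since $D$ is an affine category containing the identity string $|$, this single-string sandwiching is a legitimate operation inside $D$, so $|\sigma|\in E$ automatically. Note that this is exactly the operation that the pair-based projective operations cannot produce, which is precisely why it must be imposed as a separate axiom on the projective side.

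Next I would prove injectivity by reconstructing $D$ from $E$ via rotation. A single-point rotation is a categorical operation identifying $D(2a+1,2b+1)$ bijectively with the even-even part $D(2a+2,2b)=E(a+1,b)$; hence the odd-odd components of $D$ are the images of components of $E$ under a fixed bijection, while the even-even components are $E$ itself. Thus $D$ is entirely determined by $E$, giving injectivity. For surjectivity I would run this backwards: starting from a projective category $E$, set $D(2a,2b)=E(a,b)$ and define the odd-odd parts $D(2a+1,2b+1)$ as the single-rotation images of $E$, and then check that the resulting collection $D$ is a genuine affine category, with $E(k,l)=D(2k,2l)$ holding by construction.

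The main obstacle is this last verification of surjectivity: showing that the reconstructed $D$ is closed under all affine categorical operations. The delicate cases are those mixing parities, namely tensoring or composing two odd-odd pairings, which lands in an even-even component that must be shown to lie in $E$. This is exactly where the shift axiom enters: the single-string flexibility $\sigma\to|\sigma|$, unavailable from the pair-based operations of $E$ alone, is what absorbs the ``half-shifted'' configurations produced when odd-odd pairings interact, and what guarantees that the rotated odd parts are consistent (independent of whether one rotates a top or a bottom point, which reflects closure of $E$ under double rotation). Once this closure is established, the two constructions are visibly mutually inverse, and the asserted bijection follows.
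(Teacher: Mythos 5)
Your proposal is correct and follows essentially the same route as the paper: define $E(k,l)=D(2k,2l)$, recover the odd--odd components of $D$ from $E$, and observe that the shift axiom $\sigma\to|\sigma|$ is exactly what makes the mixed-parity tensor and composition closures go through. The only cosmetic difference is that you characterize the odd components via one-point rotation, whereas the paper uses the equivalent device $D(k,l)=\{\sigma:|\sigma\in E(k+1,l+1)\}$ and then writes out the closure implications explicitly.
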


\begin{proof}
The construction of correspondence $D\to E$ is clear, simply by setting:
$$E(k,l)=D(2k,2l)$$

Conversely, given $E=(E(k,l))$ as in Definition 4.3, we can set:
$$D(k,l)=\begin{cases}
E(k,l)&(k,l\ {\rm even})\\
\{\sigma:|\sigma\in E(k+1,l+1)\}&(k,l\ {\rm odd})
\end{cases}$$

Our claim is that $D=(D(k,l))$ is a category of partitions. Indeed:

(1) The composition action is clear. Indeed, when looking at the numbers of legs involved, in the even case this is clear, and in the odd case, this follows from:
$$|\sigma,|\sigma'\in E\implies |^\sigma_\tau\in E\implies{\ }^\sigma_\tau\in D$$

(2) For the tensor product axiom, we have 4 cases to be investigated. The even/even case is clear, and the odd/even, even/odd, odd/odd cases follow respectively from:
$$|\sigma,\tau\in E\implies|\sigma\tau\in E\implies\sigma\tau\in D$$
$$\sigma,|\tau\in E\implies|\sigma|,|\tau\in E\implies|\sigma||\tau\in E\implies|\sigma\tau\in E\implies\sigma\tau\in D$$
$$|\sigma,|\tau\in E\implies||\sigma|,|\tau\in E\implies||\sigma||\tau\in E\implies \sigma\tau\in E\implies\sigma\tau\in D$$

(3) Finally, the conjugation axiom is clear from definitions.

Now with these definitions in hand, both compositions $D\to E\to D$ and $E\to D\to E$ follow to be the identities, and the quantum group assertion is clear as well.
\end{proof}

Now back to the uniqueness issues, we have here:

\begin{theorem}
We have the following results:
\begin{enumerate}
\item $O_N^*$ is the only intermediate easy quantum group $O_N\subset G\subset O_N^+$.

\item $PU_N$ is the only intermediate projectively easy quantum group $PO_N\subset G\subset PO_N^+$.
\end{enumerate}
\end{theorem}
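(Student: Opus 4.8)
The plan is to reduce both statements to the known classification of categories of pairings lying between $NC_2$ and $P_2$, and then, for the projective statement, to transport this across the bijection of Proposition 4.5.

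For (1), I would first invoke the easiness dictionary (Tannaka--Woronowicz duality, in the form underlying Definition 4.2): an intermediate easy quantum group $O_N\subset G\subset O_N^+$ is exactly the same data as a category of pairings $D=(D(k,l))$ with $NC_2\subset D\subset P_2$, the correspondence being $Hom(u^{\otimes k},u^{\otimes l})=span(D(k,l))$ and being injective because the Hom-spaces determine $G$. Thus (1) is equivalent to the assertion that the only category of pairings strictly between $NC_2$ and $P_2$ is $P_2^*$. This last fact is precisely the uniqueness result recalled in the Introduction and proved in \cite{bve}; it can also be read off from the filtered-group argument in the proof of Theorem 1.4, the three admissible categories $NC_2\subset P_2^*\subset P_2$ matching the three admissible groups $\{1\}\subset S_\infty^*\subset S_\infty$. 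Feeding these three categories back through the dictionary yields exactly $O_N^+,O_N^*,O_N$, so the only proper intermediate easy quantum group is $O_N^*$.

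For (2), the idea is that it follows formally from (1) together with Proposition 4.5. By Definition 4.4, an intermediate projectively easy quantum group $PO_N\subset H\subset PO_N^+$ is the same data as a projective category $E=(E(k,l))$ with $NC_2\subset E\subset P_2$, the two bounds corresponding to $PO_N^+$ and $PO_N$ (coming from $NC_2$ and $P_2$ via projective versions). Proposition 4.5 gives a bijection $D\leftrightarrow E$ between affine and projective categories, realizing $G\to PG$ at the quantum group level, and I would check that it is inclusion-preserving, so that the extreme cases match ($NC_2\to NC_2$, $P_2\to P_2$) and projective categories with $NC_2\subset E\subset P_2$ correspond exactly to affine categories with $NC_2\subset D\subset P_2$. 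Applying (1), there are precisely three of the latter, and under $G\to PG$ they produce $PO_N^+,PO_N^*,PO_N$. Using the identification $PO_N^*=PU_N$ recalled in Section 3, the only proper intermediate projectively easy quantum group is $PU_N$.

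The genuinely hard input is not internal to this theorem but imported, namely the classification of intermediate categories of pairings from \cite{bve}. Within the proof itself the delicate points are only bookkeeping: verifying that the correspondence of Proposition 4.5 is monotone so that the bounds $NC_2$ and $P_2$ align on both sides, and keeping straight the three identifications $O_N^+\to PO_N^+$, $O_N^*\to PU_N$, $O_N\to PO_N$. The one structural point that must be in place is that every projectively easy $H$ is of the form $PG$ for an easy $G$, i.e. that the bijection of Proposition 4.5 is genuinely onto the setting of Definition 4.4; this is already supplied by the quantum-group part of Proposition 4.5, after which (2) is immediate from (1).
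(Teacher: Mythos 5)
Your proposal matches the paper's proof exactly: part (1) is cited from \cite{bve} as the classification of categories of pairings between $NC_2$ and $P_2$, and part (2) is deduced from it via the affine--projective duality of Proposition 4.5. Your additional bookkeeping remarks (monotonicity of the correspondence, surjectivity onto projectively easy quantum groups) are sound and are implicitly covered by the paper's Proposition 4.5.
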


\begin{proof}
The assertion regarding $O_N\subset O_N^*\subset O_N^+$ is from \cite{bve}, and the assertion regarding $PO_N\subset PU_N\subset PO_N^+$ follows from it, and from the duality in Proposition 4.5.
\end{proof}

There are of course a number of finer results waiting to be established, regarding the inclusions $S^{N-1}_\mathbb R\subset S^{N-1}_{\mathbb R,*}$ and $P^{N-1}_\mathbb R\subset P^{N-1}_\mathbb C$, with inspiration from Proposition 4.1. The results in \cite{bic}, \cite{bdu} provide in principle useful tools in dealing with such questions.

\end{document}